\title{Diplomarbeit}
\author{Karim Adiprasito}
\DeclareMathAlphabet{\mathpzc}{OT1}{pzc}{m}{it}
\newtheorem*{theorem*}{Theorem}
\newtheorem{theorem}{Theorem}
\newtheorem{Cor}[theorem]{Corollary}
\newtheorem{Lemma}[theorem]{Lemma}
\newtheorem{Prop}[theorem]{Proposition}
\newcommand{\conv}[1]{\textsc{conv}(#1) }
\newcommand{\e}{\varepsilon}
\newcommand{\R}{\mathbb{R}}
\newcommand{\E}{\mathcal{E}}
\newcommand{\bd}[1]{\textsc{bd}(#1) }
\newcommand{\intx}[1]{\textsc{int}(#1)}
\renewcommand{\epsilon}{\varepsilon }
\theoremstyle{remark}
\theoremstyle{definition}
\begin{document}
\title{Characterization of polytopes via tilings with similar pieces}
\author{Karim ADIPRASITO\footnote{The final preparation of this paper was supported by the Deutsche Forschungsgemeinschaft within the research training group `Methods for Discrete Structures' (GRK1408).}\\ \small Institute of Mathematics, Discrete Geometry Group, Free University Berlin, Arnimallee 2, 14195 Berlin\\
\small e-mail: karim.adiprasito@fu-berlin.de}
\date{\today}
\maketitle
\bfseries
\mdseries
\begin{abstract} Generalizing results by Valette, Zamfirescu and Laczkovich, we will prove that a convex body $K$ is a polytope if there are sufficiently many tilings which contain a tile similar to $K$. Furthermore, we give an example that this can not be improved.
\end{abstract}
Consider a convex body (a compact convex set with nonempty interior) $K$ in $\R^n$, which is tiled into a finite number of convex bodies. A tiling $T$, as a formal object, will here be the set consisting of all its tiles, and all tiles will be assumed to be convex bodies. Also, this investigation will only consider proper tilings, that is, tilings which are not trivial (meaning they consist of more than just one tile).\\
We consider the following problem: Let there be tiles similar to $K$. Does it follow that $K$ is a polytope?\\
For dimension 2, M. Laczkovich (\cite{decolac}) could show that if one tile is similar to $K$, and if the tiling is proper, $K$ is in fact a polytope. He generalized a remark by G. Valette and T. Zamfirescu in \cite{decovz}. \\
Now the question arises: is this extendable to dimension 3? Already the original paper by Laczkovich contained a remark that the immediate generalization must be wrong and left the problem open for higher dimensions.\\
Indeed, a circular cone can easily be tiled in such a way that one tile is similar to the cone by just cutting near the apex. T. Zamfirescu asked whether this was optimal, i.e.: Consider a convex body $K$ in 3-space which is tiled in such a way such that 2 tiles are similar to $K$. Then is $K$ a polytope?\\
This will turn out to be true, and is a very special case of the general theorem. In dimensions higher than 3, a condition only on the number of similar tiles will never be sufficient (see the example at the end of this paper). The additional condition will encode some information on how the tiles are located relative to the convex body itself. In consistency with our observations, the condition will degenerate in dimensions 2 and 3. We thus are able to answer Zamfirescu's question as well as solve the problem.\\
Let us state the theorem:
\begin{theorem}
\label{main}
Let $K$ be a $n$-dimensional convex body, and let $T_i,\ \{1,2,3,...,n-1\}$ be $n-1$ proper tilings of $K$, each of which contains a tile $L_i$ similar to $K$.\\
If the convex hull of the fixed points $x_{L_i}$ forms a nondegenerate $n-2$-dimensional simplex, then $K$ is a polytope.
\end{theorem}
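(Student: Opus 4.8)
The aim is to show that $\partial K$ is contained in a finite union of hyperplanes, which forces $K$ to be a polytope; so I argue by contradiction and assume $K$ is not a polytope. First I extract the mechanism hidden in the tiling hypothesis. Since $T_i$ is proper we have $L_i\subsetneq K$, so the similarity $\phi_i$ with $\phi_i(K)=L_i$ is a contraction of some ratio $\lambda_i<1$; it has the unique fixed point $x_{L_i}$ and may be written $\phi_i(x)=x_{L_i}+\lambda_i O_i(x-x_{L_i})$ with $O_i$ orthogonal. In the finite tiling $T_i$ any two tiles are convex bodies with disjoint interiors, hence lie on opposite sides of a hyperplane and meet only in a subset of it; therefore $\partial L_i\cap\intx{K}$ is contained in a finite union $\mathcal H_i$ of hyperplanes, and $\partial L_i\subseteq\partial K\cup\mathcal H_i$. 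As $\phi_i(\partial K)=\partial L_i$, applying $\phi_i^{-1}$ yields the self-referential inclusion $\partial K\subseteq\phi_i^{-1}(\partial K)\cup\mathcal H_i'$ with $\mathcal H_i'$ again a finite union of hyperplanes, and iterating it $k$ times,
\begin{equation*}
\partial K\ \subseteq\ \phi_i^{-k}(\partial K)\ \cup\ \bigcup_{j=0}^{k-1}\phi_i^{-j}(\mathcal H_i').
\end{equation*}

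Next I would localise this near each fixed point; the relevant case is $x_{L_i}\in\partial K$ (if $x_{L_i}\in\intx{K}$ the iteration forces $\partial K$ to be flat almost everywhere, a side case treated separately). Since $\phi_i^{-1}$ is an expansion centred at $x_{L_i}$, intersecting the displayed inclusion with a small ball about $x_{L_i}$ and sending $k\to\infty$ is a blow-up of $\partial K$ at $x_{L_i}$, which converges to the boundary of the tangent cone $C_i=\mathrm{Tan}(K,x_{L_i})$. Because $L_i\subseteq K$ and $\phi_i$ fixes $x_{L_i}$, this cone already satisfies $O_i(C_i-x_{L_i})=C_i-x_{L_i}$; separating the (on $\partial K$ negligible) hyperplane contributions from the part governed by $C_i$ should show, more strongly, that $\partial K$ is conically rigid near $x_{L_i}$, agreeing with $\partial C_i$ up to finitely many hyperplanes. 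One such condition alone is not enough --- the apex of a circular cone realises exactly this --- which is why $n-1$ tilings, rather than a single one, must be imposed.

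Finally I would combine the $n-1$ local pictures using the hypothesis that $x_{L_1},\dots,x_{L_{n-1}}$ are affinely independent, i.e.\ that they span an affine subspace of codimension $2$. The point is that a boundary point at which $\partial K$ is genuinely curved can be transported, by compositions of the maps $\phi_i$ and their inverses, into a neighbourhood of some $x_{L_j}$ in a way incompatible with the conical rigidity established there; the codimension-$2$ spread of the simplex of fixed points is precisely what makes these $n-1$ cones overdetermine the global shape, forcing every boundary point into one of finitely many hyperplanes --- the contradiction sought.

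The main obstacles are the second and third steps. Making ``conical rigidity'' precise requires controlling the blow-up together with the growing family of auxiliary hyperplanes, and showing that an $O_i$-invariant cone which is not polyhedral is too rigid to coexist with the remaining tilings; one must also dispose of the degenerate subcase in which $\partial K$ carries infinitely many flat faces accumulating somewhere rather than a genuinely curved piece. Converting affine independence of the fixed points into a global covering of $\partial K$ by finitely many hyperplanes is the other delicate point, and all of it has to be carried out for the $n-1$ tilings simultaneously, which is exactly where the dimension hypothesis is genuinely used --- and where, as the final example of the paper shows, it cannot be relaxed.
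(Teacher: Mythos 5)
What you have written is a plan whose load-bearing step is missing, and you say so yourself: the entire theorem rests on the third stage, where the $n-1$ local ``conical rigidity'' statements at affinely independent fixed points are supposed to overdetermine $\partial K$, and for that stage you offer only the hope that a curved boundary point ``can be transported \dots in a way incompatible with the conical rigidity.'' No mechanism is given, and none is obvious: as you note, rigidity at a single fixed point is exactly satisfied by a circular cone, so everything hinges on how the $n-1$ conditions interact, and the codimension-$2$ affine span of the fixed points enters your argument only as a slogan. The first stage (the self-referential inclusion $\partial K\subseteq\phi_i^{-1}(\partial K)\cup\mathcal H_i'$ and its iteration, the reduction to $x_{L_i}\in\partial K$, the identification of the tangent cone at $x_{L_i}$ with that of $L_i$ via compactness of the orthogonal group) is sound and coincides with the paper's preparatory lemmas. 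But a blow-up of $\phi_i^{-k}(\partial K)$ at $x_{L_i}$ also has to contend with the fact that $O_i^{-k}$ need not converge, so even the ``conical rigidity'' you want requires passing to a subsequence of powers near the identity, as the paper does when it straightens each similarity into a homothety.

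The paper closes the gap you leave open by an entirely different device: induction on the dimension. It first shows that the fixed point can be moved to an arbitrary point of the tip simplex and that $K=\conv{\{x_L\}\cup\bigcup_i B_i}$ for finitely many compact convex $(n-1)$-dimensional bases $B_i\subset\bd{K}$. Then it cuts $K$ with a hyperplane $H$ through a relative interior point of the tip simplex: the induced tilings of $H\cap K$ again satisfy the hypotheses of the theorem one dimension down, because $S\cap H$ is a nondegenerate $(n-3)$-simplex spanned by the fixed points of the sliced homothetic tiles. Hence $H\cap K$ is a polytope for all but finitely many $H$; choosing $H$ parallel to a base $B$ carrying infinitely many extremal points of $K$ produces a section that is simultaneously a polytope and a homothet of $B$, a contradiction. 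This is precisely where the codimension-$2$ hypothesis is consumed --- it guarantees the sliced tip simplex has the right dimension --- and it is the ingredient your plan would need to invent or replace. As it stands, the proposal is not a proof.
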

Here, the point $x_L$ is the fixed point of the similarity $f_L$ from $K$ to $L$. If there is more than one similarity from $K$ to $L$, choose one.\\
The simplex referred to in theorem \ref{main} will be called the {\it tip simplex} of $K$ with respect to the tilings involved.\\
Often, one is in the situation to have a single tiling, and several tiles are similar to $K$. This is a special case of above theorem, which is a bit harder to prove, since then one has to deal with interdependencies between the tiles similar to $K$ when deforming tilings.
\begin{Cor}
Let $K$ be a $n$-dimensional convex body, which is properly tiled into a finite number of convex bodies, $n-1$ of which are similar to $K$. Denote these particular tiles by $L_i$.\\
If the convex hull of the fixed points $x_{L_i}$ forms a nondegenerate $(n-2)$-dimensional simplex, then $K$ is a polytope.
\end{Cor}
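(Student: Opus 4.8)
The plan is to reduce the statement to Theorem~\ref{main}. Naively one may take $T_1=\dots=T_{n-1}=T$: this already satisfies every hypothesis of the theorem, and if one is content to invoke Theorem~\ref{main} as a black box the proof is complete. The content of the corollary — and the reason it is ``a bit harder'' — is that the argument behind Theorem~\ref{main} deforms each $T_i$ \emph{independently}: a small deformation of $T_i$ produces a nearby proper tiling of $K$ whose distinguished tile is still similar to $K$ but with a slightly displaced fixed point, and the $(n-2)$-dimensionality of the tip simplex is exactly what makes the resulting family of displacements rich enough to contradict the hypothesis that $K$ is not a polytope. So one wants, out of the single tiling $T$, an $(n-1)$-tuple of tilings that are genuinely independently deformable, rather than $n-1$ copies of the same one.

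Such a tuple is produced by self-similar refinement. For a fixed index $i$, the similarity $f_{L_i}$ maps $T$ to a tiling of $L_i$; grafting this tiling of $L_i$ into $T$ in place of the tile $L_i$, and iterating this refinement $k$ times, yields a proper tiling $S_i$ of $K$ with two features. First, $S_i$ still contains $L_j$ for every $j\neq i$, hence it still contains $n-1$ tiles similar to $K$ whose fixed points span the same nondegenerate $(n-2)$-simplex. Second, $S_i$ contains a distinguished similar copy $M_i:=f_{L_i}^{k}(L_i)$ of $K$ with the same fixed point $x_{L_i}$, and since $f_{L_i}^{k}(L_i)\to\{x_{L_i}\}$ as $k\to\infty$, for $k$ large enough $M_i$ is contained in a ball about $x_{L_i}$ lying inside $\intx{L_i}$. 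Because distinct tiles of one tiling have disjoint interiors, that ball meets no other tile of $T$, in particular no $L_j$ with $j\neq i$; hence the deformation of $S_i$ needed in the proof of Theorem~\ref{main} — the deformation of $M_i$ within that ball — disturbs neither $L_j$ nor $x_{L_j}$ for $j\neq i$. Thus $S_1,\dots,S_{n-1}$ satisfy the hypotheses of Theorem~\ref{main} and are independently deformable in the precise sense its proof requires, and one concludes exactly as there.

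The main obstacle is this independence of the deformations, i.e. the bookkeeping that forces the distinguished similar copy of $S_i$ into the interior of $L_i$ and keeps it away from the other distinguished tiles. The one case that resists it directly is $x_{L_i}\in\partial K$: then $M_i$ cannot be squeezed into $\intx{L_i}$, and a separate, more delicate argument is needed — although here the contraction $f_{L_i}$ fixes a boundary point of $K$, which already constrains $K$ near $x_{L_i}$, and in the small dimensions $n\le 3$ the tip-simplex hypothesis degenerates in any case. Apart from this, no ingredient beyond the proof of Theorem~\ref{main} is required.
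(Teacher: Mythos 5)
Your first paragraph is already the paper's entire proof: the statement of Theorem~\ref{main} nowhere requires the tilings $T_1,\dots,T_{n-1}$ to be distinct, so setting $T_i:=T$ for all $i$ and distinguishing $L_i$ in the $i$-th copy satisfies every hypothesis, and the corollary follows. The paper gives no further argument (its remark that the single-tiling case is ``a bit harder'' refers to what one would face proving it from scratch; the multi-tiling formulation of the theorem is designed precisely to absorb that difficulty). Your worry about ``independent deformability'' is in fact unfounded: the proof of Theorem~\ref{main} never deforms the given tilings in place, it manufactures \emph{new} tilings from them by the operations $T+f_L(T')$ and $H(T)*\{K\}$, and these operations are perfectly happy with $T=T'$ --- the tile $f_{L_1}(L_2)$ of $T+f_{L_1}(T)$ is a genuinely new similar copy with a new fixed point even when the two input tilings coincide. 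So the refinement construction in your second paragraph is unnecessary; and if it were necessary it would not close the argument, because it breaks in the only case that matters. The paper reduces at the outset to $x_{L_i}\in\bd{K}$ (if $x_{L_i}\in\intx{K}$ then $K$ is a polytope immediately by iterating $f_{L_i}$), and for a boundary point of $K$ no ball about $x_{L_i}$ can lie in $\intx{L_i}\subset\intx{K}$, so your claim that $M_i$ sits in such a ball is false precisely in the surviving case. (A repairable weaker statement is available --- by the tangent-cone argument of Lemma~\ref{homot} there is an $\e$ with $B_\e(x_{L_i})\cap K\subset L_i$, so $M_i$ still avoids every other tile of $T$ --- but you would have to say this, not defer it to ``a separate, more delicate argument,'' since what you defer is the whole problem.)
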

\begin{Cor}
Let $K$ be a convex body in $\R^n,\ n\leq3$, which is properly tiled into a finite number of convex bodies, $n-1$ of which are similar to $K$. Then $K$ is a polygon/ a polyhedron.
\end{Cor}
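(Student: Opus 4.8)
The plan is to deduce this Corollary from the preceding one by checking, for $n\le 3$, that its hypothesis on the tip simplex is automatically satisfied. For $n=1$ there is nothing to prove, a one-dimensional convex body being a segment and hence a polytope. For $n=2$ there is one tile $L_1$ similar to $K$, the tip simplex is the single point $x_{L_1}$, which is a nondegenerate $0$-simplex; so the preceding Corollary applies at once and returns Laczkovich's theorem.

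The content is therefore in the case $n=3$. Here two tiles $L_1,L_2$ of a proper tiling of $K$ are similar to $K$, so $\intx{L_1}\cap\intx{L_2}=\emptyset$, and the tip simplex $[x_{L_1},x_{L_2}]$ is a nondegenerate $1$-simplex \emph{exactly when} $x_{L_1}\ne x_{L_2}$. I would assume the contrary, $x_{L_1}=x_{L_2}=:x$. Each $f_{L_i}\colon K\to L_i$ has ratio $<1$ (a tile of a proper tiling has strictly smaller volume than $K$), hence is a contraction of $\R^3$, and its attracting fixed point $x$ must lie in $L_i$; so $x\in L_1\cap L_2$.

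Next I compare $K$ with the $L_i$ near $x$. If $x$ were an interior point of $K$, then the affine bijection $f_{L_i}$, fixing $x$ and mapping $K$ onto $L_i$, would map $\intx{K}$ onto $\intx{L_i}$, so $x\in\intx{L_1}\cap\intx{L_2}$, which is impossible. Hence $x\in\bd{K}$, and the tangent cone $C:=T_xK$ is a \emph{proper} closed convex cone, so its solid angle obeys $0<\sigma(C)<4\pi$. Because $f_{L_i}$ fixes $x$, its linear part equals $\lambda_i R_i$ with $R_i$ orthogonal, whence $T_xL_i=R_iC$; from $L_i\subseteq K$ one gets $R_iC\subseteq C$, and $R_1C$ and $R_2C$ still have disjoint interiors, since the interior of the tangent cone of a convex body at a point is exactly the set of directions pointing into the body's interior. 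As the solid angle is rotation invariant and additive over cones with disjoint interiors, $\sigma(C)\ge\sigma(R_1C)+\sigma(R_2C)=2\sigma(C)$, which forces $\sigma(C)\le 0$, a contradiction. Therefore $x_{L_1}\ne x_{L_2}$, the tip simplex is a nondegenerate $1$-simplex, and the preceding Corollary gives that $K$ is a polyhedron.

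\emph{Main obstacle.} Essentially all the substance of this Corollary is imported from the preceding one, and ultimately from Theorem~\ref{main}; the only genuine step is the $n=3$ tangent-cone argument, within which the two delicate points are (i) ruling out that the common fixed point $x$ lies in $\intx{K}$, forcing it onto $\bd{K}$, and (ii) passing the disjointness of the interiors of $L_1$ and $L_2$ to their tangent cones at $x$, so that the strict super-additivity of the solid angle of a full-dimensional cone can be invoked. The rest is routine bookkeeping with similarities and their fixed points.
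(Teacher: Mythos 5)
Your proposal is correct, and the overall strategy (reduce to the preceding Corollary by showing the tip-simplex hypothesis is automatic for $n\le 3$) is exactly what the paper intends; the paper gives no explicit proof and treats the nondegeneracy as immediate. Where you genuinely diverge is in \emph{how} you establish $x_{L_1}\neq x_{L_2}$ for $n=3$. The paper's (implicit) route is contained in the proof of Lemma~\ref{homot}: using compactness of $O_n$ one shows that the tangent cones of the iterates $f_L^i(K)$ at $x_L$ all coincide with $TC_{x_L}(K)$, whence $B_\e\cap K=B_\e\cap L$ for small $\e$, so $L$ is the \emph{only} tile containing $x_L$; since the fixed point of each contracting similarity lies in its own tile, two distinct similar tiles cannot share a fixed point. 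Your route replaces this with a solid-angle computation: after forcing $x$ onto $\bd{K}$ (which the paper instead handles by observing that $x_L\in\intx{K}$ already makes $K$ a polytope), you use that $T_xL_i=R_iC\subseteq C$ with disjoint interiors and that $\sigma$ is rotation-invariant and additive, getting $\sigma(C)\ge 2\sigma(C)$. Both arguments are sound and both in fact work in every dimension (they show distinct similar tiles in one tiling always have distinct fixed points — which, as you implicitly note, only suffices for $n\le3$ because two distinct points automatically span a nondegenerate $1$-simplex, whereas for $n\ge4$ distinctness does not give affine independence). Your version is more elementary and self-contained — it avoids the compactness-of-$O_n$ limit argument — while the paper's version comes for free from a lemma it needs anyway and yields the stronger local statement $B_\e\cap K=B_\e\cap L$, which is reused later. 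The only cosmetic quibble: at a boundary point the tangent cone lies in a supporting halfspace, so you could state $0<\sigma(C)\le 2\pi$ rather than $<4\pi$, but nothing in your argument depends on this.
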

Even more, in dimension 3, symmetry also plays a role in this calculation:
\begin{Cor}
Let $K$ be a convex body in $\R^3$, which is properly tiled into a finite number of convex bodies, $1$ of which is similar to $K$. If there are two similarities $f_L,f_L'$ from $K$ to the tile $L$ with different fixpoints respectively, then $K$ is a polytope.
\end{Cor}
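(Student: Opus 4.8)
The plan is to reduce this to Theorem \ref{main} in the case $n=3$. In that case the theorem asks for $n-1=2$ proper tilings of $K$, each containing a tile similar to $K$, with the property that the fixed points of the two associated similarities span a nondegenerate $(n-2)=1$-dimensional simplex, i.e.\ are two \emph{distinct} points. Thus the hypothesis that there are two similarities $f_L,f_L'$ from $K$ onto $L$ with $x_L\neq x_L'$ is, once we have manufactured a second tiling from the single given one, exactly the hypothesis of Theorem \ref{main}. First I would record the standing facts: since $T$ is proper, $L$ is properly contained in $K$, so $f_L$ and $f_L'$ have ratio strictly less than $1$, are contractions, and have their unique fixed points $x_L,x_L'$ inside $L\subseteq K$.

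Next I would build the second tiling by refining $T$ inside the tile $L$. Since $f_L'(K)=L$, the image $f_L'(T)$ is again a proper tiling, now of $L$; hence $T':=(T\setminus\{L\})\cup f_L'(T)$ is a tiling of $K$ with strictly more tiles than $T$, so $T'$ is proper and $T'\neq T$. The tile $f_L'(L)\in T'$ is similar to $K$ via $(f_L')^2$, which is a contraction carrying $K$ onto $f_L'(L)$; its fixed point is again $x_L'$, because $x_L'$ is fixed by $f_L'$ and hence by $(f_L')^2$, and a contraction has a unique fixed point. I now have two proper tilings: $T_1:=T$ with similar tile $L_1:=L$, for which I choose the similarity $f_L$ so that $x_{L_1}=x_L$; and $T_2:=T'$ with similar tile $L_2:=f_L'(L)$, for which I choose the similarity $(f_L')^2$ so that $x_{L_2}=x_L'$. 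By hypothesis $x_{L_1}\neq x_{L_2}$, so the convex hull of $x_{L_1}$ and $x_{L_2}$ is a nondegenerate $1$-dimensional simplex, the tip simplex, and Theorem \ref{main} gives that $K$ is a polytope.

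The reduction itself is short; the step I expect to require the most care is the routine but finicky verification that $T'$ is genuinely a tiling of $K$ --- that $(T\setminus\{L\})\cup f_L'(T)$ consists of convex bodies with pairwise disjoint interiors covering $K$ (using that $f_L'(T)$ covers $L$ while $T\setminus\{L\}$ covers $\overline{K\setminus L}$) and that it is proper and distinct from $T$. One could instead try to feed $T$ to Theorem \ref{main} twice with the two different chosen similarities; this is legitimate only if the proof of Theorem \ref{main} treats each tiling together with its marked tile and chosen similarity as a separate datum, and the refinement above makes the two inputs genuinely distinct, so I would prefer that route. Finally I would remark on why the dimension must be $3$: in $\R^n$ the tip simplex is $(n-2)$-dimensional, and two distinct fixed points can only span a segment, so an extra symmetry of a single similar tile helps precisely when $n=3$.
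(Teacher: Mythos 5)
Your reduction is correct and is essentially the paper's own argument: in dimension $3$ the tip simplex is a segment, so two similarities with distinct fixed points supply exactly the data Theorem \ref{main} needs. The paper in fact takes the even shorter route you flag as an alternative --- it simply regards the single tiling $T$ twice, once with $f_L$ and once with $f_L'$ as the chosen similarity for $L$ --- so your refinement of $T$ inside $L$ via $f_L'$ is an unnecessary (though harmless, and arguably more scrupulous) extra step.
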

\begin{proof}
We can regard this situation in the following way: $K$ has two tilings $T$ and $T'$ which are the same, but in $T$ $f_L$ is the similarity from $K$ to $L$, and in $T'$ $f_L'$ is considered to be the similarity from $K$ to $L$. Theorem \ref{main} then concludes the proof.
\end{proof}
Denote by $\bd{M},\ \intx{M},\ \E(M),\ \conv{M}$ the the boundary/ the interior/ set of extremal points/ the convex hull of a set $M$. Also, $B_\e(x)$ denotes the set of all points in $\R^n$ with distance less than $\e$ to $x$.\\
Consider the general situation of a convex body $K$ tiled into convex bodies $K\supset P_j\in T,\ j\in [s]:=\{1,2,3,...s\}$. For every 2 tiles $P_i,\ P_j$ there is a hyperplane $H_{ij}$ seperating the two. For a fixed $i\in [s]$, let $F_{ij}$ be the halfspace with boundary $H_{ij}$ containing $P_i$. Then, $P_i=K\cap \bigcap_{i\neq j} F_{ij}$, and every extremal point $x\in \intx{K}$ of $P_i$ is a vertex of $\bigcap_{i\neq j} F_{ij}$. In particular, $\E(T):=\cup{\E(P_j)}$ has finitely many elements in common with $\intx{K}$, i.e. $\E(T)\cap\intx{K}$ is finite.
Suppose $L=P_1$ is similar to $K$, then the similarity transformation $f_L$ from $K$ to $L$ maps $\E(K)$ to $\E(L)$ bijectively. If the fixed point $x_L$ of $f_L$ lies in $\intx{K}$, then for some iteration of $f_L$, $f_L^n(K)$ lies in the interior of $K$. Since $\E(f_L^n(K))=\E(f_L^n(K))\cap \intx{K}$ is finite, so is $\E(K)$, and $K$ is a polytope. Therefore, we can assume that $x_L\in\bd{K}$. 
\section*{Adapting a tiling}
In this section, we focus on adapting tilings until they have features that come handy in the later proof. First, we want $L$ to be homothetic to $K$. Then we want to move the fixed point to an arbitrary point of the tip simplex.\\
\\
Before we begin, let us state our methods to deform a tiling:\\
Let $K$ be a convex body, and let $T$ be a tiling of $K$, containing a tile $L$ similar to $K$. Let $f_L$ be a similarity mapping from $K$ to $L$. Define $f_L(T)$ to be the function $f_L$ applied to all the tiles of $T$, and thus a tiling of $L$. The idea is that we can refine the tiling $T$ to $T'$ by defining $T':= f_L(T)\cup T \setminus \{L\}$. In this situation, we will also write $T'=f_L(T)+ T$. The similarity mappings of newly created tiles will be defined by the composition of the similarities involved. We call this procedure {\it iterating a tiling}.\\
In a related situation, suppose we are given two tilings $T_1$ and $T_2$, we could form the tiling $T_1 * T_2:=\{P\cap Q|\ P\in T_1,\ Q\in T_2\}$. If $T_1$ tiles a set $K_1$, and $T_2$ tiles $K_2$, then $T_1 * T_2$ is a tiling of the intersection of $K_1$ and $K_2$. In particular, if $K_1=K_2$, then $T_1 * T_2$ is a (possibly refined) tiling of $K_1$. Let us apply the above methods to concrete tilings to show that simplifications are indeed possible.
\begin{Lemma}
\label{homot}
Let $K$ be a convex body as above, $T$ a tiling which contains a tile $L$ similar to $K$. Then there is a tiling $T'$ of $K$ which contains a homothetic copy of $K$ whose fixed point coincides with $x_L$. 
\end{Lemma}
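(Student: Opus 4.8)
The plan is to put the similarity into its canonical form and then iterate the tiling often enough to cancel the rotational part. Write $f_L(x)=x_L+\lambda A(x-x_L)$ with $A\in O(n)$ and $0<\lambda<1$ (a strict contraction, since $L$ is a proper subset of $K$). A homothety with fixed point $x_L$ is precisely the case $A=\mathrm{id}$, so the task is to get rid of $A$.

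Suppose first that $A$ has finite order, $A^m=\mathrm{id}$. Put $L_j:=f_L^j(K)$; since $f_L(K)=L\subseteq K$ and $f_L$ commutes with its powers, $K=L_0\supseteq L_1\supseteq\cdots$ is a nested chain of similar copies shrinking to $x_L$, and $f_L^j$ carries the tiling $T$ of $K$ onto a tiling of $L_j$. Then
\[
 T':=\{L_m\}\ \cup\ \bigcup_{j=0}^{m-1} f_L^j\bigl(T\setminus\{L\}\bigr)
\]
is a tiling of $K$: the family $f_L^j(T\setminus\{L\})$ tiles the shell $L_j\setminus\intx{L_{j+1}}$, and these $m$ shells together with $L_m$ exhaust $K$ with pairwise disjoint interiors; it is proper because $T$ is. Finally $L_m=f_L^m(K)$ and $f_L^m(x)=x_L+\lambda^m A^m(x-x_L)=x_L+\lambda^m(x-x_L)$ is a homothety with fixed point $x_L$, so $T'$ does what is asked.

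The work is all in disposing of the case where $A$ has infinite order. Here I would first use $L_j\subseteq K$ for all $j$: since tangent cones respect inclusion at a common boundary point and the derivative of $f_L^j$ at $x_L$ is $\lambda^jA^j$, the tangent cone $C$ of $K$ at $x_L$ satisfies $r_{x_L,A^j}(C)\subseteq C$ for every $j$, where $r_{x_L,A^j}$ denotes the rotation about $x_L$ with linear part $A^j$. Since $\overline{\langle A\rangle}\subseteq O(n)$ is compact we may pick $A^{j_k}\to\mathrm{id}$, and a short limiting argument then upgrades these inclusions to genuine invariance of $C$ under $r_{x_L,A}$. In the plane this already finishes the matter: an irrational rotation has no invariant full-dimensional proper cone, so $A$ must have finite order and the previous paragraph applies. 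In general one splits further: either $A$ is a symmetry of $K$ about $x_L$, and then $L=f_L(K)$ is itself a homothetic copy of $K$ with fixed point $x_L$, so $T'=T$ works; or it is not, and then---by counting the vertices $f_L^j(e)$, $e\in\E(K)$, in the interiors of the iterated tilings, exactly as in the reduction already carried out above---one obtains that $\E(K)$ is finite, i.e.\ $K$ is a polytope, in which case $T'$ is produced directly by slicing $K$ along the facet-hyperplanes of the small homothetic copy $\lambda(K-x_L)+x_L\subseteq K$. I expect this last bookkeeping---distinguishing the vertices of the iterated tilings that remain interior from those that escape to $\bd{K}$---to be the delicate point; the finite-order case itself is entirely routine.
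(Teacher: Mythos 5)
Your finite-order case is correct (it is exactly the paper's ``iterating a tiling'' construction applied $m$ times), and your derivation that the tangent cone $C=TC_{x_L}(K)$ is invariant under $A$ is also correct and is precisely the paper's key observation. The gap is in the infinite-order case for $n\ge 3$: the dichotomy ``either $A$ is a symmetry of $K$ about $x_L$, or $K$ is a polytope'' is false. Take $K=C\cap H$ in $\R^3$, where $C=\{x:\ x_3\ge\sqrt{x_1^2+x_2^2}\}$ and $H=\{x:\ x_3+\tfrac{1}{10}x_1\le 1\}$ (an obliquely truncated circular cone), and let $A$ be the rotation about the $x_3$-axis by an angle incommensurable with $\pi$. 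For small $\lambda$ the set $L:=\lambda A(K)=C\cap\lambda A(H)$ lies in $K$, and the closure of $K\setminus L$ is the intersection of $K$ with the closed half-space complementary to $\lambda A(H)$, hence convex; so $K$ admits a proper two-piece tiling with a tile similar to $K$ for which $A$ has infinite order, $A(K)\neq K$, and $K$ is not a polytope. Your proposed vertex count cannot repair this: the reduction at the start of the paper only bounds $\E(T)\cap\intx{K}$, and in this example every non-apical extremal point of $f_L^j(K)$ (the ellipse $\partial C\cap\partial(\lambda^jA^jH)$) lies on $\bd{K}$, so no finiteness of $\E(K)$ follows --- nor should it, since $K$ genuinely is not a polytope here yet the Lemma still holds for it.

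The paper's way out uses exactly the invariance you established, but pointed in the other direction: instead of trying to force $A^m=\mathrm{id}$ or to classify $A$, it iterates the tiling $d$ times and then applies $tM_L^{-d}$ (the inverse rotation composed with a dilation by $t>1$) to the entire iterated tiling. The inverse rotation untwists the tile $f_L^d(K)$ into the genuine homothet $t\lambda^dK$; and because $K$ coincides with its $A$-invariant tangent cone on a ball $B_\e(x_L)$ --- which follows from the stability of the nested tangent cones, forcing $L$ to be the only tile at $x_L$ --- the blown-up, rotated copy $tM_L^{-d}(K)$ contains $K$, so intersecting the transported tiling with $K$ (the operation $*\ \{K\}$) produces the desired tiling of $K$ containing a homothetic copy with fixed point $x_L$. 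You already have every ingredient for this; the missing idea is to move the tiling by the inverse rotation rather than to eliminate the rotation.
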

\begin{proof}
Denote by $f_L$ the similarity transformation from $K$ to $L$. We may assume $x_L=0$.
Let $O_n$ denote the set of orthogonal transformations of $R^n$. Let $\lambda$ denote the similarity
ratio of $f_L$, and define $M_L:=\lambda^{-1} f_L$. Then $M_L \in O_n$. Since $O_n$
forms a compact subset of the space of n-dimensional matrices, it follows that the
powers of $M_L$ can get as close to the identity matrix as we please.\\
Write $B_\e:=B_\e(0)$ for the open ball of radius $\e$ around $0$ and $TC(K):=TC_0 (K)=TC_{x_L}(K)$ for the solid tangent cone of $K$ at 0. 
Obviously, the solid tangent cones of later iterations of $f_L(K)$ are included in earlier:
$$TC(K)\supset TC(L) \supset TC(f^2_L(K)) \supset TC(f^3_L(K)) ...$$
But since we can get $f^i_L$ as close to a homothety as we want, we can get $B_\e\cap TC(f^i_L(K))$
as close to $B_\e\cap TC(K)$ (with respect to, for example, the Pompeiu-Hausdorff metric and a fixed $\e>0$) as we want by choosing $i\in \mathbb{N}$. Thus, the solid tangent cones of $K$ and $f^i_L(K),\ i\in \mathbb{N}$ at $x_L$ coincide. In particular, because all tiles are compact, convex and have nonempty interior, their tangent cones are never degenerate, and it follows that $L$ is the only tile of $T$ which contains $0=x_L$. Then there is an $\e>0$ such that
$$K\supset B_\e\cap K=B_\e\cap L=B_\e\cap TC(K).$$
In particular, $x_L$ is not an accumulation point of extremal points of $K$.\\
Choose $R$ such that $B_R\supset K$, set $t:=R/\e$ and $d$ such that $\lambda^d<\e$ and $t\lambda^d<1$. There is a tiling $T$ of $K$ such that $f_L^d(K)$ is a tile of $K$. $tM_L^{-d}(T)$ gives a tiling of $tM_L^{-d}(K)$, and by the choice of $t$, $tM_L^{-d}(K)\supset K$. Thus, $tM_L^{-d}(T)* \{K\}$ is a tiling of $K$ which contains $L':=tM_L^{-d}(f_L^d(K))=t\lambda^d K\subsetneq K$, which is a homothetic copy of $K$.
\end{proof}
Before we go on, let us consider the above constructed tiling. Recall the notion of hyperplanes $H_{1j}$ seperating the tiles $P_1=L'$ and $P_j$ of $K$, then $L'$ is the convex hull of 
$$\{x_L\}\cup (H_{1j}\cap TC(K))=\{x_L\}\cup (H_{1j}\cap K).$$ Thus, we have the following Lemma:
\begin{Lemma}
$K$ can be written as $$\conv{\{x_L\}\bigcup_i B_i},$$ where the $B_i\subset\bd{K}$ are finitely many convex compact $n-1$-dimensional sets with boundary (which can be chosen to be disjoint from $\{x_L\}$). We will call these sets a base of $K$.
\end{Lemma}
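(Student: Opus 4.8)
The plan is to transport the homothetic tile $L'$ produced by Lemma~\ref{homot} back onto $K$ by the inverse of its defining homothety. The starting point is the description recorded in the paragraph just above: writing $P_1=L'$ and letting $H_{1j}$ be the hyperplanes separating $L'$ from the remaining tiles, one has $L'=K\cap\bigcap_{j}F_{1j}$ and
$$L' = \conv{\{x_L\}\cup\bigcup_{j}\bigl(H_{1j}\cap L'\bigr)},$$
where each $H_{1j}\cap L'=L'\cap H_{1j}$ is an exposed face of $L'$ (since $L'\subseteq F_{1j}$, the hyperplane $H_{1j}$ supports $L'$). If one prefers this convex-hull identity from scratch rather than from the preceding paragraph: $L'=\conv{\E(L')}$; an extreme point of $L'$ lying in $\intx{K}$ is a vertex of the polyhedron $\bigcap_j F_{1j}$ and hence lies on some $H_{1j}$; an extreme point $z\neq x_L$ of $L'$ lying on $\bd{K}$ either lies on some $H_{1j}$ or has a $K$-neighbourhood contained in $L'$ and is then extreme in $K$ --- but $\E(K)\cap\bd{L'}\subseteq\{x_L\}$, because any boundary point of $L'$ other than $x_L$ has the form $(1-c)x_L+cw$ with $w\in\bd{K}\setminus\{x_L\}$ and $c\in(0,1)$ the homothety ratio, so it is a proper convex combination of the two distinct points $x_L,w$ of $K$ and therefore not extreme in $K$.

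Now apply $\phi:=f_{L'}^{-1}$, the homothety of $\R^n$ with centre $x_L$ and ratio $c^{-1}>1$; it is an affine bijection fixing $x_L$ and carrying $L'$ onto $K$. Setting $B_j:=\phi(H_{1j}\cap L')=K\cap\phi(H_{1j})$, the hyperplane $\phi(H_{1j})$ supports $K=\phi(L')$, so $B_j$ is an exposed face of $K$, in particular $B_j\subset\bd{K}$, and it is compact and convex as the affine image of a compact convex set. Applying $\phi$ to the displayed identity yields
$$K = \conv{\{x_L\}\cup\bigcup_{j}B_j}.$$
Discarding the finitely many indices for which $B_j$ fails to be $(n-1)$-dimensional, each remaining $B_j$ is an $(n-1)$-dimensional compact convex subset of $\bd{K}$, hence has nonempty relative boundary, and $x_L\notin B_j$ (equivalently $x_L\notin H_{1j}$, which holds for the retained indices). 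These $B_j$ are the desired base sets.

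The routine parts are the affine-image bookkeeping and the convex-hull identity, both immediate once one has Lemma~\ref{homot} and the polyhedral description of a tile. The step I expect to be the real obstacle is the last one: verifying that, after deleting the degenerate walls, the surviving $B_j$ are genuinely $(n-1)$-dimensional, are disjoint from $x_L$, and still suffice to reconstruct $K$. Concretely, a separating hyperplane through $x_L$ is impossible, since near $x_L$ the tile $L'$ coincides with the solid tangent cone $TC(K)$, so such a hyperplane would have to cut into $TC(K)$ at its apex (contradicting $L'\subseteq F_{1j}$) or support it there (which collapses its wall with the full-dimensional neighbouring tile to a point); and every extreme point of $L'$ other than $x_L$ already lies on one of the genuine $(n-1)$-dimensional walls $H_{1j}\cap L'$ --- for extreme points in $\intx{K}$ because those walls tile $\bd{L'}\cap\intx{K}$, and for extreme points on $\bd{K}$ because the separating hyperplane responsible for the extremality must cut properly into $K$ there. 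The convex-hull description of $K$ then persists when the index set is cut down to these walls.
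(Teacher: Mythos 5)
Your proof follows the same route as the paper: decompose the homothetic tile $L'$ produced by Lemma \ref{homot} as the convex hull of $\{x_L\}$ and its walls $H_{1j}\cap L'$, then transport this decomposition by the inverse homothety so that the walls become exposed faces of $K$. You in fact supply considerably more detail than the paper's one-sentence justification, and the step you flag as delicate (extreme points of $L'$ lying on $\bd{K}$, and whether the convex-hull identity survives discarding the degenerate walls) does close along the lines you sketch --- such a point is necessarily a limit of points of $\bd{L'}\cap\intx{K}$ (otherwise $\intx{K}$ near it would split into the two nonempty open pieces $\intx{L'}$ and $\intx{K}\setminus L'$, contradicting connectedness), and a measure/covering argument shows that $\bd{L'}\cap\intx{K}$ is contained in the closed union of the genuinely $(n-1)$-dimensional walls.
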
 
Now that the tiles are directed nicely, we want to turn to adjusting their position.
\begin{Lemma}
Let $K$ be a convex body as above, $T_i$ tilings fulfilling the conditions of theorem \ref{main}, $S$ the tip simplex, $x_0$ a point in the tip simplex. Then there is a tiling $T'$ with a tile $L$ similar to $K$ such that $x_L=x_0$. 
\end{Lemma}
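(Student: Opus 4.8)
The plan is to realise $x_0$ as the fixed point of a composition of homotheties, each of which is the similarity onto a tile of some tiling of $K$, these tilings then being iterated into one another.

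Write $x_0=\sum_{i=1}^{n-1}t_ic_i$ with $c_i:=x_{L_i}$, $t_i\ge 0$, $\sum_it_i=1$; these barycentric coordinates are unique because $S$ is a nondegenerate $(n-2)$-simplex. Let $I:=\{i:t_i>0\}$, enumerated as $I=\{i_1,\dots,i_m\}$. If $m=1$ then $x_0=c_{i_1}$ and the tiling furnished by Lemma~\ref{homot} applied to $T_{i_1}$ already does the job, so assume $m\ge 2$. The first step is to strengthen Lemma~\ref{homot} to: for each $i$ and every ratio $\rho\in(0,1)$ there is a proper tiling of $K$ one of whose tiles is the homothetic copy $c_i+\rho(K-c_i)$ of $K$ (still based at $c_i$, and contained in $K$ by convexity). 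This is a mild extension of the argument already given for Lemma~\ref{homot}: there one shows that the orthogonal part $M_{L_i}$ of $f_{L_i}$ leaves the solid tangent cone $TC_{c_i}(K)$ invariant, whence $t\,M_{L_i}^{-d}(K)\supseteq TC_{c_i}(K)\cap B_{t\e}\supseteq K$ for \emph{every} scaling factor $t\ge R/\e$ (with $B_R\supseteq K$), not only for $t=R/\e$. Choosing $d$ large and $t:=\rho\lambda_i^{-d}\ge R/\e$, then iterating $T_i$ $d$ times and applying the similarity $t\,M_{L_i}^{-d}$ followed by intersection with $K$, produces a tiling of $K$ with the tile $t\lambda_i^dK=\rho K$ based at $c_i$; call the associated homothety $g_k$ (center $c_{i_k}$, ratio $\rho_k$, to be fixed) and the tiling $T_k'$.

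Next one chooses the ratios $\rho_k\in(0,1)$ so that $g_1\circ\cdots\circ g_m$ fixes $x_0$. A composition of homotheties with ratios in $(0,1)$ is a homothety of ratio $\prod_k\rho_k$, and from the telescoping identity $1-\prod_k\rho_k=\sum_k(\prod_{l<k}\rho_l)(1-\rho_k)$ one checks that the $i_k$-th barycentric coordinate of its center equals $(\prod_{l<k}\rho_l)(1-\rho_k)/(1-\prod_l\rho_l)$. Fixing any $q\in(0,1)$ and setting $P_0:=1$, $P_k:=1-(t_{i_1}+\cdots+t_{i_k})(1-q)$ (which decreases from $1$ to $q$ since the $t_{i_k}$ are positive) and $\rho_k:=P_k/P_{k-1}\in(0,1)$, we get $\prod_{l<k}\rho_l=P_{k-1}$, $(1-\rho_k)P_{k-1}=P_{k-1}-P_k=t_{i_k}(1-q)$ and $1-\prod_l\rho_l=1-q$, so the $i_k$-th coordinate of the center is exactly $t_{i_k}$, and hence the center is $x_0$. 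Finally, iterating $T_1',\dots,T_m'$ into one another — replace the distinguished tile of $T_1'$ (the copy based at $c_{i_1}$) by $g_1(T_2')$, then the tile $(g_1\circ g_2)(K)$ by $(g_1\circ g_2)(T_3')$, and so on — yields a proper tiling $T'$ of $K$ whose tile $L:=(g_1\circ\cdots\circ g_m)(K)$ is homothetic to $K$ with $x_L=x_0$.

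I expect the only genuinely delicate step to be the strengthening of Lemma~\ref{homot}: one has to observe that its scaling factor $t$ is a free parameter ranging over $[R/\e,\infty)$, so that the set of attainable similarity ratios of a homothetic tile based at $c_i$ is all of $(0,1)$ rather than a discrete set accumulating at $0$. It is exactly this continuum of ratios that supplies the degrees of freedom needed to place the fixed point of $g_1\circ\cdots\circ g_m$ on $x_0$ precisely; the remaining arguments are the routine barycentric bookkeeping above.
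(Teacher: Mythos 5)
Your argument is correct, but it reaches the lemma by a noticeably different route than the paper. The paper composes the homotheties based at two vertices of the tip simplex (the tiling $T_1+f_{L_1}(T_2)$ contains the tile $f_{L_1}(L_2)$, whose fixed point lies strictly between $x_{L_1}$ and $x_{L_2}$) and iterates this to obtain only a \emph{dense} set of attainable fixed points; to hit a prescribed $x_0$ exactly it then takes an attainable fixed point on the segment from $x_{L_1}$ to $x_0$, applies the expanding homothety $H$ centred at $x_{L_1}$ carrying $f_L(x_0)$ to $x_0$, and intersects the blown-up tiling with $K$. You instead push that same ``expand and intersect with $K$'' trick back into Lemma~\ref{homot} itself, observing correctly that the scaling factor $t\ge R/\e$ there is a free parameter, so that every ratio $\rho\in(0,1)$ is attainable for a homothetic tile based at a given vertex $c_i$ once $d$ is large enough that $(R/\e)\lambda_i^d\le\rho$; your barycentric bookkeeping with $P_k=1-(t_{i_1}+\cdots+t_{i_k})(1-q)$ is correct, the composed homothety $g_1\circ\cdots\circ g_m$ has ratio $q$ and fixed point exactly $x_0$, and the iterated tiling contains $(g_1\circ\cdots\circ g_m)(K)$ as a tile. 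What your version buys is the elimination of the density argument and of the case distinction between attainable and non-attainable points, and it makes the higher-dimensional case genuinely uniform where the paper only asserts that it ``works just in the same way''; the price is that Lemma~\ref{homot} cannot be used as a black box but must be reproved in the strengthened form. Do state explicitly that the resulting tiling is proper (immediate, since $q<1$ gives $L\subsetneq K$ and each replacement step substitutes a tile by a proper tiling of it).
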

\begin{proof}
If the tip simplex is just a point, the Lemma is trivial. Suppose therefore that there are at least two tilings with $T_1$ and $T_2$ with tiles $L_1$ respectively $L_2$ which are similar to $K$ and have distinct fixed points, and suppose the corresponding similarities are chosen to be homotheties. Then the tiling $T_1+f_{L_1} (T_2)$ will contain a homothetic copy of $K$: $L_3=f_{L_1}(L_2)$. $L_3$ is new to us, because the corresponding fixed point will not coincide with $x_{L_1}$ or $x_{L_2}$, but will lie in their convex hull. By iterating this procedure, we see that there is a dense subset $M$ of $\conv{\{x_{L_1}, x_{L_2}\}}$ we can make the fixed points of similarities lie in.\\
Next, suppose $x_0\in \conv{\{x_{L_1}, x_{L_2}\}}\setminus M$. Find a tiling $T$ with a similar copy 
$L$ of $K$ such that $x_L\in \conv{\{x_{L_1},x_0\}}$. (Again, we assume the similarities to be homotheties.) Define $H(x)$ to be the homothety which fixes $x_{L_1}$ and maps $f_{L}(x_0)$ to $x_0$, and let $T':=H(T)$, which covers $K$. $T'*\{K\}$ is a tiling of $K$. If $L$ is chosen small enough, $H(L)$ is a proper subset of $K$ and thus a tile of $T'*\{K\}$ with fixed point $x_0$. With higher dimensional tip simplices, this construction works just in the same way.
\end{proof}
\section*{Conclusion of proof}
We will prove theorem \ref{main} by induction. In dimension 1, it is trivial, in dimension 2, it was proven by Laczkovich.\\
\\
Let us assume theorem \ref{main} is proven in dimension $n-1$. Let $K$ be a convex body fulfilling the conditions of the theorem \ref{main} in Dimension $n$, let $x$ be some point in the relative interior of the tip simplex $S$, and let $H$ be some $n-1$-dimensional hyperplane of $\R^n$ containing $x$. $H\cap K$ is a $n-1$-dimensional convex set. It could even be of smaller dimensions, so let us just assume $H\cap K$ is not a point. Let $T'_i$ be proper tilings of $K$ whose similarities are homotheties and whose fixed points are also the extremal points of the $n-3$-dimensional simplex $S\cap H$.\\
$H\cap K$ inherits a tiling structure from $K$ by means of intersection: $T'_i$ induces the tiling $\{H\} * T'_i$ on $H\cap K$. This tiling is proper if $H$ does not coincide with any of the hyperplanes separating an $L_i$ from an adjacent tile, which is true for all but finitely many choices of $H$. Note that because $L'_i$ is a homothetic copy of $K$ whose fixed point lies in $H$, $H\cap L'_i$ will be an element of $T'_i$ which is a homothetic copy of $H\cap K$. The fixed point of $H\cap L'_i$ in $H\cap K$ coincides with the fixed point of $L'_i$ in $K$ and is therefore an extremal point of $S\cap H$. Since $x$ is a relative interior point of $S$, $S\cap H$ is a nondegenerate $n-3$-dimensional simplex spanned by the fixed points of $H\cap L'_i$, which in turn are elements of the proper tilings $\{H\} * T'_i$ of $H\cap K$.\\
Using the induction hypothesis, we see that $H\cap K$ must be a polytope.
\begin{Prop}
\label{almost}
Let $K$ be a convex body in $\R^n$, which is tiled as in the description of theorem \ref{main}. Let $S$ be the tip simplex, $x$ a point in the relative interior of this simplex, and let $H$ be some hyperplane of $\R^n$ containing $x$. Then $H\cap K$ is a polytope in all but possibly a finite number of cases.
\end{Prop}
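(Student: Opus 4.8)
The plan is to argue by induction on $n$, with inductive hypothesis that Theorem~\ref{main} holds in dimension $n-1$ (for $n=3$ this is Laczkovich's planar theorem). If $\dim(H\cap K)\le 1$ the section is a point or a segment and is therefore already a polytope, so I may assume that $H\cap K$ is an $(n-1)$-dimensional convex body inside $H\cong\R^{n-1}$ (which is automatic when $x\in\intx K$). The goal is then to show that, with finitely many $H$ excluded, the section $H\cap K$ inherits from $K$ a family of $n-2$ proper tilings meeting the hypotheses of Theorem~\ref{main} one dimension down.

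First I would push the relevant fixed points onto $H$. Because $x$ lies in the \emph{relative interior} of $S$, the slice $S\cap H$ has dimension at least $n-3$, so it contains $n-2$ affinely independent points $y_1,\dots,y_{n-2}$; their convex hull is a nondegenerate $(n-3)$-dimensional simplex. By the lemma permitting the fixed point of a tile similar to $K$ to be relocated to any prescribed point of the tip simplex, and normalizing via Lemma~\ref{homot} so that the similarities are homotheties, I obtain proper tilings $T'_1,\dots,T'_{n-2}$ of $K$ in which $T'_i$ contains a homothetic copy $L'_i=\phi_i(K)$ of $K$ with homothety centre $x_{L'_i}=y_i\in H$.

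Next I would restrict to $H$. Intersecting the tiles of $T'_i$ with $H$ produces the induced tiling $\{H\}*T'_i$ of $H\cap K$. Since a homothety with centre on $H$ maps $H$ onto itself, $H\cap L'_i=\phi_i(H\cap K)$, a homothetic --- hence similar --- copy of $H\cap K$ that lies in $\{H\}*T'_i$ and has $y_i$ as its fixed point. The tiling $\{H\}*T'_i$ can fail to be proper only when $H$ coincides with one of the finitely many hyperplanes separating adjacent tiles of $T'_i$; discarding this finite set of exceptional $H$, all $n-2$ induced tilings are proper. Thus $H\cap K$ is an $(n-1)$-dimensional convex body equipped with $n-2$ proper tilings whose distinguished similar pieces have fixed points $y_1,\dots,y_{n-2}$ spanning a nondegenerate $(n-3)$-dimensional simplex, and the inductive hypothesis forces $H\cap K$ to be a polytope.

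The step I expect to demand the most care is the middle one. One must verify that $S\cap H$ really does contain $n-2$ affinely independent points --- this is precisely where the hypothesis that $x$ lies in the \emph{relative interior} of $S$, and not merely in $S$, is used --- that the homotheties $\phi_i$ may be taken with small enough ratio for each $H\cap L'_i$ to be a genuine (proper, full-dimensional) tile of $H\cap K$, and that properness of the induced tilings is lost for only finitely many $H$. Once this bookkeeping is done, the descent of the hypotheses to the section is immediate and the induction closes.
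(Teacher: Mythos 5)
Your proposal is correct and follows essentially the same route as the paper: relocate the fixed points (via the homothety and relocation lemmas) to $n-2$ affinely independent points of $S\cap H$, pass to the induced tilings $\{H\}*T'_i$ of $H\cap K$, discard the finitely many $H$ for which properness fails, and invoke the inductive hypothesis. Your extra care in selecting affinely independent points of $S\cap H$ rather than assuming it is itself a simplex is a minor but welcome refinement.
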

\begin{proof}[Proof of theorem \ref{main}]
Assume theorem \ref{main} is proven in dimension $n-1$.\\
As already stated, $K$ is the convex hull of the union of a finite number of $n-1$ dimensional compact convex sets and a point $x$ in the relative interior of the tip simplex. Since we supposed $K$ is not a polytope, one of these sets has infinitely many extremal points which are also extremal points of $K$. Call this set $B$, and recall that $B\subset \bd{K}$.
Pick another point $y$ in the relative interior of the tip simplex of $K$. Choose a $n-1$-dimensional affine subspace $H$ parallel to $B$ and containing $y$. If $H\cap \conv{\{x\}\cup B}=\emptyset$, interchange the roles of $y$ and $x$. If $H$ contains $x$, tilt $H$ just a little such that it intersects $\conv{\{x\}\cup B}$ but neither intersects $\{x\}$ nor the base $B$. Also, assume $H$ is not one of the finite hyperplanes excluded in proposition \ref{almost}.\\
The intersection of $H$ and $K$ is, as we know from proposition \ref{almost}, a polytope. But $H\cap B$ will have infinitely many extremal points, as it is a (possibly dilated, if we tilted $H$) homothetic copy of $B$. Thus, $K$ and $B$ can only share a finite number of extremal points, in contradiction with the assumption.
\end{proof}
\subsection*{Sharpness of results}
We could now ask if the above results are optimal, and indeed, they are. First note that as soon as we have constructed (in any dimension) a tiling $T$ of a convex body $K$ which contains (at least) 2 similar copies $L,L'$ of $K$, then we could create a tiling with more similar copies by regarding the tiling $f_L(T)+T$. Note however that we can never make a degenerate tip simplex nondegenerate using this method. The dimension of the tip simplex is thus the real condition to make $K$ a polytope, a property not visible in dimensions 2 or 3.\\
Using induction on dimensions, we can construct a convex body which even allows us to see that the condition on the tip simplex is optimal, in particular, $n-3$-dimensional tip simplices are not enough to conclude that $K$ is a polytope. Figuratively speaking, we take a circular cone, which shows theorem \ref{main} to be sharp in dimension 3, and take it as a base for a 4-dimensional cone, which in turn forms a base of a five-dimensional cone etc. This example is just an extension of the example known for dimension 3.\\
To make a concrete example with $n-3$ dimensional tip simplex in dimension $n>2$, use the following construction: Regard the convex body $K$ which is the set of all points
$$\sqrt{x_1^2+x_2^2}+\sum_{i\in\{3,4,5,...,n\}} x_i\leq 1;\ x_i\geq 0\ \forall i\in\{3,4,5,...,n\}$$
where the $x_i$ are coordinates with respect to some base $\{e_1,e_2,e_3,...,e_n\}$ of $\R^n$. We will show that the tip simplex $S$ is $\conv{\bigcup_{i\in\{3,4,5,...,n\}} \{e_i\}}$.\\
We regard the homotheties
$$f_i(x)=\frac{1}{2}(x-e_i) + e_i$$
for $i\in\{3,4,5,...,n\}$. Their fixed points span the said tip simplex, and the interior of the images of $K$ does not intersect. Thus, it remains to show that the remaining tile is convex. But this is simple, since it can be written as intersection of the convex sets $K$ and $X_i,\ i\in\{3,4,5,...,n\}$, 
$$X_i:=\{x\in \R^n|x_i \leq \frac{1}{2} \}.$$

\end{document}